\documentclass[a4paper,12pt,reqno]{amsart}
\usepackage{a4wide}
\usepackage{amsmath}
\usepackage{amssymb}
\usepackage{amsthm}
\usepackage{latexsym}
\usepackage{graphicx}
\usepackage[english]{babel}
              
\newtheorem{prop}[subsection]{Proposition}

\newtheorem{teor}[subsection]{Theorem}
\newtheorem{lema}[subsection]{Lemma}
\newtheorem{cor} [subsection]{Corollary}
\theoremstyle{definition}

\theoremstyle{remark}
\newtheorem{obs} [subsection]{Remark}
\newtheorem{exm} [subsection]{Example}

\newcommand{\Zng}{$\mathbb Z^n$-graded $S$-module}

\def\sdepth{\operatorname{sdepth}}
\def\qdepth{\operatorname{hdepth}}
\def\depth{\operatorname{depth}}

\def\deg{\operatorname{deg}}

\def\PP{\operatorname{P}}

\selectlanguage{english}

\numberwithin{equation}{section}

\begin{document}

\title[Comb.inequalities related to squarefree monomial ideals]{Several combinatorial inequalities related to squarefree monomial ideals}
\author[Silviu B\u al\u anescu, Mircea Cimpoea\c s 
       ]
  {Silviu B\u al\u anescu$^1$ and Mircea Cimpoea\c s$^2$
	}
\date{}

\keywords{Stanley depth, Hilbert depth, Depth, Monomial ideal}

\subjclass[2020]{05A18, 06A07, 13C15, 13P10, 13F20}

\footnotetext[1]{ \emph{Silviu B\u al\u anescu}, University Politehnica of Bucharest, Faculty of
Applied Sciences, 
Bucharest, 060042, E-mail: silviu.balanescu@stud.fsa.upb.ro}
\footnotetext[2]{ \emph{Mircea Cimpoea\c s}, University Politehnica of Bucharest, Faculty of
Applied Sciences, 
Bucharest, 060042, Romania and Simion Stoilow Institute of Mathematics, Research unit 5, P.O.Box 1-764,
Bucharest 014700, Romania, E-mail: mircea.cimpoeas@upb.ro,\;mircea.cimpoeas@imar.ro}

\begin{abstract}
Let $K$ be a field and $S=K[x_1,\ldots,x_n]$, the ring of polynomials in $n$ variables, over $K$. Using the fact that the 
Hilbert depth is an upper bound for the Stanley depth of a quotient of squarefree
monomial ideals $0\subset I\subsetneq J\subset S$, we prove several combinatorial inequalities which involve the coefficients of the polynomial $f(t)=(1+t+\cdots+t^{m-1})^n$.
\end{abstract}

\maketitle

\section{Introduction}

Let $K$ be a field and $S=K[x_1,\ldots,x_n]$ the polynomial ring over $K$.
Let $M$ be a \Zng. A \emph{Stanley decomposition} of $M$ is a direct sum $\mathcal D: M = \bigoplus_{i=1}^rm_i K[Z_i]$ as a 
$\mathbb Z^n$-graded $K$-vector space, where $m_i\in M$ is homogeneous with respect to $\mathbb Z^n$-grading, 
$Z_i\subset\{x_1,\ldots,x_n\}$ such that $m_i K[Z_i] = \{um_i:\; u\in K[Z_i] \}\subset M$ is a free $K[Z_i]$-submodule of $M$. 
We define $\sdepth(\mathcal D)=\min_{i=1,\ldots,r} |Z_i|$ and $$\sdepth(M)=\max\{\sdepth(\mathcal D)|\;\mathcal D\text{ is 
a Stanley decomposition of }M\}.$$ The number $\sdepth(M)$ is called the \emph{Stanley depth} of $M$. 

Herzog, Vladoiu and Zheng show in \cite{hvz} that $\sdepth(M)$ can be computed in a finite number of steps if $M=I/J$, 
where $J\subset I\subset S$ are monomial ideals. 
In \cite{apel}, J.\ Apel restated a conjecture firstly given by Stanley in 
\cite{stan}, namely that $$\sdepth(M)\geq\depth(M),$$ for any \Zng $\;M$. This conjecture proves to be false, in general, for 
$M=S/I$ and $M=J/I$, where $0\neq I\subset J\subset S$ are monomial ideals, see \cite{duval}, but remains open for $M=I$.
Another open question, proposed by Herzog \cite{her}, is the following: Is it true that $\sdepth(I)\geq \sdepth(S/I)+1$ for any monomial ideal $I$?

The explicit computation of the Stanley depth is a computational difficult task, even in the very special case of the ideal $\mathbf m=(x_1,\ldots,x_n)$,
see \cite{biro}. This is one of the reason, a new invariant, associated to (multi)graded $S$-modules, called Hilbert depth, was introduced,
which gives a natural upper bound for the Stanley depth. See \cite{bruns} for further details. 

In \cite{lucrare2}, we proved a new
formula for the Hilbert depth of $J/I$, where $0\neq I\subsetneq J\subset S$ are squarefree monomial ideals; see Section $2$.

In this paper, our aim is to deduce several combinatorial inequalities, using the fact that $\qdepth(J/I)\geq \sdepth(J/I)$, in
certain cases when sharp lower bounds for $\sdepth(J/I)$ are known. 
In Section $3$, we consider the path ideal of length $m$ of a path graph, i.e. 
$$I_{n,m}=(x_1x_2\cdots x_m,\;x_{2}x_3\cdots x_{m+1},\;\ldots,x_{n-m+1}\cdots x_n)\subset S.$$
In Proposition \ref{aknm} we show that the number of squarefree monomials of degree $k$ which do not belong to $I_{n,m}$
is equal to 
$$\binom{n-k+1,\;m}{k}:=\text{ the coefficient of }x^k\text{ in the expansion of }(1+x+\cdots+x^{m-1})^{n-k+1}.$$
We mention that the above expansion was firstly studied by Euler \cite{euler}. For a modern perspective, we refer the
reader to \cite[Page 77]{comtet}.

Using this, a formula for $\sdepth(S/I_{n,m})$ and the fact that $\sdepth(I_{n,m})\geq \depth(I_{n,m})$, in Theorem \ref{t31} we prove that for
$n\geq m\geq 1$ and $d:=\varphi(n,m)=n+1-\left\lfloor \frac{n+1}{m+1} \right\rfloor-\left\lceil \frac{n+1}{m+1} \right\rceil$, we have that:
\begin{enumerate}
\item[(1)] $\sum\limits_{j=0}^k (-1)^{k-j} \binom{d-j}{k-j}\binom{n-j+1,\; m}{j} = \sum\limits_{\ell=0}^{\left\lfloor \frac{k}{m} \right\rfloor} (-1)^{\ell}
\sum\limits_{j=m\ell}^k (-1)^{k-j} \binom{d-j}{k-j}\binom{n-j+1}{\ell}\binom{n-m\ell}{j-m\ell}$, $0\leq k\leq d$.
\item[(2)] $ \sum\limits_{j=0}^k (-1)^{k-j} \binom{d-j}{k-j}\binom{n-j+1,\; m}{j} \geq 0$, for all $0\leq k\leq d$.
\item[(3)] $ \sum\limits_{j=0}^k (-1)^{k-j} \binom{d+1-j}{k-j} \binom{n-j+1,\; m}{j} \leq \binom{n-d+k-2}{k} $, for all $0\leq k\leq d+1$.
\end{enumerate}
Also, we particularize (2-3) for $m=2$; see Corollary \ref{caz2-1}.

In Section $4$, we consider the path ideal of length $m$ of a cycle graph, i.e. 
$$J_{n,m}=I_{n,m}+(x_{n-m+2}\cdots x_nx_1,\ldots,x_nx_1\cdots x_{m-1})\subset S,$$
where $n>m\geq 2$ are some integers. Let $d=n - \left\lfloor \frac{n}{m+1} \right\rfloor - \left\lceil \frac{n}{m+1} \right\rceil$.

In Proposition \ref{ppp} we prove that $\sdepth(J_{n,m})\geq \depth(J_{n,m})$, i.e. $J_{n,m}$ satisfies the Stanley inequality, and, 
also, if $m\geq 3$ then $\sdepth(J_{n,m})\geq \sdepth(S/J_{n,m})+1$.
Using Proposition \ref{ppp} and some other results about $\sdepth(S/J_{n,m})$ and $\sdepth(J_{n,m}/I_{n,m})$, 
in Theorem \ref{t33} we prove that:
\begin{enumerate}
\item[(1)] $\sum\limits_{j=0}^k (-1)^{k-j} \binom{d-j}{k-j}\left( \binom{n-j+1,\; m}{j} - 
\sum\limits_{\ell=m}^{2m-2} (2m-1-\ell) \binom{n-\ell-j+1,\;m}{j-\ell}  \right) \geq 0$ for all $0\leq k\leq d$.
\item[(2)] $\sum\limits_{j=0}^k (-1)^{k-j} \binom{d+m-1-j}{k-j}
\sum\limits_{\ell=m}^{2m-2} (2m-1-\ell) \binom{n-\ell-j+1,\;m}{j-\ell}  \geq 0$ for all $0\leq k\leq d+m-1$.
\item[(3)] $\sum\limits_{j=0}^k (-1)^{k-j} \binom{d+1-j}{k-j}
\left( \binom{n-j+1,\;m}{j} - \sum\limits_{\ell=m}^{2m-2} (2m-1-\ell) \binom{n-\ell-j+1,\;m}{j-\ell} \right) \leq 
\binom{n-d+k-2}{k}$, 

for all $0\leq k\leq d+1$. In Corollary \ref{caz2-2} we particularize (1-3) for $m=2$.
\end{enumerate}

\section{Preliminaries}

First, we fix some notations and we recall the main result of \cite{lucrare2}. 

We denote $[n]:=\{1,2,\ldots,n\}$ and $S:=K[x_1,\ldots,x_n]$. 

For a subset $C\subset [n]$, we denote $x_C:=\prod_{j\in C}x_j\in S$.

For two subsets $C\subset D\subset [n]$, we denote $[C,D]:=\{A\subset [n]\;:\;C\subset A\subset D\}$,
      and we call it the \emph{interval} bounded by $C$ and $D$.

Let $I\subset J\subset S$ be two square free monomial ideals. We let:
$$\PP_{J/I}:=\{C\subset [n]\;:\;x_C\in J\setminus I\} \subset 2^{[n]}.$$
A partition of $P_{J/I}$ is a decomposition: $$\mathcal P:\;\PP_{J/I}=\bigcup_{i=1}^r [C_i,D_i],$$
into disjoint intervals.

If $\mathcal P$ is a partition of $\PP_{J/I}$, we let $\sdepth(\mathcal P):=\min_{i=1}^r |D_i|$.
The Stanley depth of $P_{J/I}$ is 
      $$\sdepth(P_{J/I}):=\max\{\sdepth(\mathcal P)\;:\;\mathcal P\text{ is a partition of }\PP_{J/I}\}.$$
Herzog, Vl\u adoiu and Zheng proved in \cite{hvz} that: $$\sdepth(J/I)=\sdepth(\PP_{J/I}).$$
Let $\PP:=\PP_{J/I}$, where $I\subset J\subset S$ are square-free monomial ideals. For any $0\leq k\leq n$, we denote:
$$\PP_k:=\{A\in \PP\;:\;|A|=k\}\text{ and }\alpha_k(J/I)=\alpha_k(\PP)=|\PP_k|.$$
For all $0\leq d\leq n$ and $0\leq k\leq d$, we consider the integers
\begin{equation}\label{betak}
  \beta_k^d(J/I):=\sum_{j=0}^k (-1)^{k-j} \binom{d-j}{k-j} \alpha_j(J/I).
\end{equation}
From \eqref{betak} we can easily deduce that
\begin{equation}\label{alfak}
  \alpha_k(J/I)=\sum_{j=0}^k \binom{d-j}{k-j} \beta_k^d(J/I),\text{ for all }0\leq k\leq d.
\end{equation}
Also, we have that
\begin{equation}\label{betak2}
\beta_k^d(J/I) = \alpha_k(J/I) - \binom{d}{k}\beta_0^d(J/I)-\binom{d-1}{k-1}\beta_1^d(J/I)-\cdots-\binom{d-k+1}{1}\beta_{k-1}^d(J/I).
\end{equation}

\begin{teor}(\cite[Theorem 2.4]{lucrare2})\label{d1}
With the above notations, the \emph{Hilbert depth} of $J/I$ is
$$\qdepth(J/I):=\max\{d\;:\;\beta_k^d(J/I) \geq 0\text{ for all }0\leq k\leq d\}.$$
\end{teor}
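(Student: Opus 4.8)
The plan is to read off $\qdepth(J/I)$ directly from the one‑variable Hilbert series of $J/I$, using the description of Hilbert depth recalled in \cite{bruns}: for a finitely generated $\mathbb Z$-graded $S$-module $M$ generated in non‑negative degrees, one has $\qdepth(M)\geq d$ if and only if the power series expansion of $(1-t)^dH_M(t)$ has only non‑negative coefficients; equivalently, if and only if $H_M(t)=\sum_i t^{a_i}/(1-t)^{d_i}$ for some integers $a_i\geq 0$, $d_i\geq d$. So it suffices to compute $(1-t)^dH_{J/I}(t)$ and to decide when all its coefficients are non‑negative.

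First I would record the Hilbert series of $J/I$ in terms of the numbers $\alpha_k(J/I)$. Since $I$ and $J$ are squarefree, a monomial $x^{\mathbf a}$ belongs to $J\setminus I$ precisely when $\supp(x^{\mathbf a})\in\PP:=\PP_{J/I}$; grouping the monomial $K$-basis of $J/I$ by support gives
$$H_{J/I}(t)=\sum_{A\in\PP}\left(\frac{t}{1-t}\right)^{|A|}=\sum_{k=0}^{n}\alpha_k(J/I)\,\frac{t^k}{(1-t)^k}.$$
Multiplying by $(1-t)^d$ and separating the summands with $k\leq d$ from those with $k>d$ yields
$$(1-t)^dH_{J/I}(t)=\sum_{k=0}^{d}\alpha_k(J/I)\,t^k(1-t)^{d-k}\;+\;\sum_{k=d+1}^{n}\alpha_k(J/I)\,\frac{t^k}{(1-t)^{k-d}}.$$
The second sum is a non‑negative combination of the power series $t^k/(1-t)^{k-d}=\sum_{j\geq k}\binom{j-d-1}{j-k}t^j$, hence has only non‑negative coefficients, all in degrees $\geq d+1$. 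The first sum is a polynomial of degree at most $d$, and the coefficient of $t^k$ in it is $\sum_{j=0}^{k}(-1)^{k-j}\binom{d-j}{k-j}\alpha_j(J/I)=\beta_k^d(J/I)$, directly from the definition \eqref{betak}.

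Putting these together, the coefficient of $t^k$ in $(1-t)^dH_{J/I}(t)$ equals $\beta_k^d(J/I)$ for $0\leq k\leq d$ and is a manifestly non‑negative quantity for $k>d$. Hence $(1-t)^dH_{J/I}(t)$ has only non‑negative coefficients if and only if $\beta_k^d(J/I)\geq 0$ for all $0\leq k\leq d$, and combining this with the characterization above gives: $\qdepth(J/I)\geq d$ if and only if $\beta_k^d(J/I)\geq 0$ for all $0\leq k\leq d$. Since $\qdepth(J/I)\leq n$, only the values $0\leq d\leq n$ are relevant, and the asserted formula for $\qdepth(J/I)$ follows.

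The one point requiring genuine care is the input from \cite{bruns}. Its ``only if'' half is immediate from any Hilbert decomposition; the ``if'' half rests on the combinatorial lemma that a polynomial $Q(t)$ for which $Q(t)/(1-t)^m$ has non‑negative power‑series coefficients admits a representation $Q(t)=\sum_i t^{a_i}(1-t)^{e_i}$ with $a_i\geq 0$ and $0\leq e_i\leq m$ (proved by peeling off the lowest‑degree term of $Q(t)/(1-t)^m$). One should also observe that, although Hilbert depth is a priori a $\mathbb Z^n$-graded invariant, for a quotient of squarefree monomial ideals it is already determined by the coarse $\mathbb Z$-graded Hilbert series, so that the computation above really does compute $\qdepth(J/I)$. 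Granting these facts, the remainder is routine bookkeeping with the numbers $\alpha_k(J/I)$ and the binomial identities \eqref{betak}--\eqref{alfak}.
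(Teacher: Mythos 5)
The paper does not prove this statement at all --- it is imported verbatim from \cite[Theorem 2.4]{lucrare2} --- so your proposal can only be measured against the argument in that reference. There the proof is purely combinatorial: one works with the polynomial $\sum_k\alpha_k(J/I)t^k$ and with (numerical) interval partitions $\sum_i t^{|C_i|}(1+t)^{|D_i|-|C_i|}$; the inversion formula \eqref{alfak} is used to manufacture such a partition with all tops of size $\ge d$ whenever $\beta_k^d\ge 0$ for all $k\le d$, and a Chu--Vandermonde computation as in \eqref{chuv} gives the converse. Your route through the coarse $\mathbb Z$-graded Hilbert series $H_{J/I}(t)=\sum_k\alpha_k t^k/(1-t)^k$ and the positivity criterion of Bruns--Krattenthaler--Uliczka is genuinely different, and the computation itself is correct: the coefficient of $t^k$ in $(1-t)^dH_{J/I}(t)$ is exactly $\beta_k^d(J/I)$ for $k\le d$ and is non-negative for $k>d$, so $(1-t)^dH_{J/I}(t)\succeq 0$ if and only if $\beta_k^d(J/I)\ge 0$ for all $0\le k\le d$.

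The gap is the sentence you yourself flag and then wave through: that the Hilbert depth of $J/I$ ``is already determined by the coarse $\mathbb Z$-graded Hilbert series.'' The invariant named in the theorem (the one used in Proposition \ref{p1} to dominate the multigraded Stanley depth) is the $\mathbb Z^n$-graded, respectively combinatorial, Hilbert depth of \cite{lucrare2}, not the $\mathbb Z$-graded one of \cite{bruns}. Every finer decomposition coarsens, so your argument yields only $\qdepth(J/I)\le\max\{d:\beta_k^d\ge 0\text{ for all }k\le d\}$; the reverse inequality requires producing a \emph{multigraded} (or interval-type) Hilbert decomposition with all tops of size $\ge d$ from the mere non-negativity of the $\beta_k^d$, and that construction --- the content of \eqref{alfak} realized as an actual partition --- is exactly what your proof omits. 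The claimed coincidence of the two Hilbert depths for squarefree quotients is not a routine observation; it is essentially equivalent to the theorem itself, so invoking it is circular. (To be fair, the weaker statement $\sdepth(J/I)\le\max\{d:\beta_k^d\ge 0\}$, which is all the present paper actually uses downstream, does follow from your argument, since a Stanley decomposition coarsens to a $\mathbb Z$-graded Hilbert decomposition.)
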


As a basic property of the Hilbert depth, we state the following:

\begin{prop}\label{p1}
Let $I\subset J\subset S$ be two square-free monomial ideals. Then $$\sdepth(J/I)\leq \qdepth(J/I).$$
\end{prop}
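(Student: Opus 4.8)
The plan is to deduce the inequality $\sdepth(J/I)\leq \qdepth(J/I)$ directly from the combinatorial descriptions of both invariants recalled above, namely $\sdepth(J/I)=\sdepth(\PP_{J/I})$ (Herzog--Vl\u adoiu--Zheng) and $\qdepth(J/I)=\max\{d: \beta_k^d(J/I)\geq 0\text{ for }0\leq k\leq d\}$ (Theorem \ref{d1}). So it suffices to show: if $\mathcal P:\PP_{J/I}=\bigcup_{i=1}^r[C_i,D_i]$ is a partition with $\sdepth(\mathcal P)=d$, i.e.\ $|D_i|\geq d$ for all $i$, then $\beta_k^d(J/I)\geq 0$ for every $0\leq k\leq d$. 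Taking the maximum over all such partitions then gives $\sdepth(\PP_{J/I})\leq \qdepth(J/I)$.

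First I would fix such a partition $\mathcal P$ with $|D_i|\ge d$ for all $i$, and count the sets of a given cardinality interval by interval. For a single interval $[C,D]$ with $|C|=c$ and $|D|=e\geq d$, the number of $j$-element sets $A$ with $C\subset A\subset D$ is $\binom{e-c}{j-c}$. Summing over $i$ gives $\alpha_j(\PP_{J/I})=\sum_{i=1}^r\binom{|D_i|-|C_i|}{\,j-|C_i|\,}$. I would then substitute this into the defining alternating sum \eqref{betak} for $\beta_k^d$ and interchange the order of summation, so that $\beta_k^d(J/I)=\sum_{i=1}^r \left(\sum_{j}(-1)^{k-j}\binom{d-j}{k-j}\binom{|D_i|-|C_i|}{j-|C_i|}\right)$. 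The key step is then to show each inner sum, i.e.\ the contribution $\beta_k^d([C_i,D_i])$ of a single interval, is nonnegative whenever $|D_i|\geq d$.

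The inner sum is a Vandermonde-type convolution. Writing $c=|C_i|$, $e=|D_i|$, the contribution is $\sum_{j=c}^{k}(-1)^{k-j}\binom{d-j}{k-j}\binom{e-c}{j-c}$. I would reindex with $j=c+t$ and use the standard identity $\sum_{t}(-1)^{t}\binom{d-c-t}{k-c-t}\binom{e-c}{t}=\binom{e-d}{k-c}$ (the alternating Vandermonde/Chu--Vandermonde convolution, valid after rewriting $\binom{d-j}{k-j}=\binom{d-j}{d-k}$ and using the negation-of-upper-index trick). This yields $\beta_k^d([C_i,D_i])=\binom{|D_i|-d}{\,k-|C_i|\,}$, which is $\geq 0$ precisely because $|D_i|\geq d$ (and it vanishes when $k<|C_i|$ or $k>|C_i|+(|D_i|-d)$, both harmless). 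Summing over $i$ gives $\beta_k^d(J/I)\geq 0$ for all $0\leq k\leq d$, hence $d\leq \qdepth(J/I)$; maximizing over partitions finishes the proof.

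The main obstacle is the combinatorial identity in the last paragraph: one must be careful that the binomial coefficients $\binom{d-j}{k-j}$ behave correctly as $j$ ranges (in particular when $j>d$, though the range $j\le k\le d$ keeps us safe), and that the Chu--Vandermonde application is set up with the right signs after converting $\binom{d-j}{k-j}$ to $(-1)^{k-j}\binom{k-d-1}{k-j}$ or, alternatively, recognizing the sum as the coefficient extraction $[x^{k-c}]\,(1+x)^{e-c}(1+x)^{-(d-c+1)}\cdots$. A clean way to avoid sign bookkeeping is to prove the identity $\beta_k^d([C,D])=\binom{|D|-d}{k-|C|}$ by showing both sides satisfy the same recursion coming from \eqref{betak2}, or simply to cite it as the known fact that $\alpha$ and $\beta$ are related by \eqref{alfak} applied to the "interval module" $K[x_C,\dots]$ whose Hilbert depth is exactly $|D|$. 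Everything else is routine.
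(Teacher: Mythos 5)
The paper states Proposition \ref{p1} without proof, as a basic property imported from \cite{lucrare2} and \cite{bruns}, so there is no in-paper argument to compare against; your strategy --- fix a partition $\mathcal P$ with $|D_i|\geq d$ for all $i$, split $\alpha_j$ over the intervals, and show each interval contributes a nonnegative amount to $\beta_k^d$ --- is indeed the standard and correct route. However, the key identity you assert is false. For an interval $[C,D]$ with $|C|=c$ and $|D|=e$, the correct evaluation is
$$\sum_{j=c}^{k}(-1)^{k-j}\binom{d-j}{k-j}\binom{e-c}{j-c}=\binom{e-c+k-d-1}{k-c},$$
not $\binom{e-d}{k-c}$. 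You can already see the discrepancy from the paper's own identity \eqref{chuv}, which is the case $c=0$, $e=n$ and yields $\binom{n-d+k-1}{k}$ rather than $\binom{n-d}{k}$; concretely, for $c=0$, $e=d+2$, $k=2$ the sum equals $3$ while your formula predicts $1$. Neither of your suggested fallbacks (matching recursions via \eqref{betak2}, or invoking \eqref{alfak}) can rescue the stated closed form, because it is simply not true.

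The error is repairable and does not affect the conclusion. If $k<c$ every term vanishes and the interval contributes $0$; if $k\geq c$, the hypothesis $e\geq d$ gives upper index $e-c+k-d-1\geq k-c-1$, so $\binom{e-c+k-d-1}{k-c}$ is an ordinary nonnegative binomial coefficient when $e>d$, equals $0$ when $e=d$ and $k>c$, and equals $\binom{-1}{0}=1$ in the boundary case $e=d$, $k=c$. Hence each interval contributes a nonnegative quantity, $\beta_k^d(J/I)\geq 0$ for all $0\leq k\leq d$, and maximizing over partitions gives $\sdepth(J/I)\leq\qdepth(J/I)$ as desired. So the architecture of your proof is sound; you need only replace the claimed closed form by the correct one and verify its nonnegativity as above.
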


\section{The $m$-path ideal of a path graph}

Let $n\geq m\geq 1$ be two integers and 
$$I_{n,m}=(x_1x_2\cdots x_m,\;x_{2}x_3\cdots x_{m+1},\;\ldots,x_{n-m+1}\cdots x_n)\subset S,$$
be the $m$-path ideal associated to the $m$-path of length $n$. We define:
\begin{equation}\label{e31}
\varphi(n,m):=n+1 - \left\lfloor \frac{n+1}{m+1} \right\rfloor - \left\lceil \frac{n+1}{m+1} \right\rceil.
\end{equation}
According to \cite[Theorem 1.3]{path} we have that 
\begin{equation}\label{e32}
\sdepth(S/I_{n,m}) = \depth(S/I_{n,m})=\varphi(n,m).
\end{equation}
Also, according to \cite[Proposition 1.7]{comm}, we have that
\begin{equation}\label{e33}
\sdepth(I_{n,m}) \geq \depth(I_{n,m})=\varphi(n,m)+1.
\end{equation}

Note that $\alpha_k(S/I_{n,m})$ counts the number of squarefree monomial of degree $k$ which do not belong to $I_{n,m}$.

\begin{prop}\label{aknm}
With the above notations, $\alpha_k(S/I_{n,m})=\binom{n-k+1,\;m}{k}:=$ the coefficient of $x^k$ 
from the expansion $(1+x+\cdots+x^{m-1})^{n-k+1}$.

In particular $\alpha_k(I_{n,m})=\binom{n}{k}-\binom{n-k+1,\;m}{k}$.
\end{prop}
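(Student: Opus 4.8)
The plan is to count squarefree monomials of degree $k$ outside $I_{n,m}$ directly, by translating membership in $I_{n,m}$ into a combinatorial condition on subsets $A\subset[n]$, and then to identify the resulting count with the Euler coefficient. First I would observe that a squarefree monomial $x_A$ lies in $I_{n,m}$ if and only if $A$ contains some window $\{i,i+1,\ldots,i+m-1\}$ of $m$ consecutive integers, i.e.\ if and only if $A$ contains $m$ consecutive elements of $[n]$. Hence $\alpha_k(S/I_{n,m})$ counts the $k$-subsets $A\subset[n]$ that do \emph{not} contain $m$ consecutive integers. So the whole statement reduces to the identity
\[
\#\{A\subset[n]:\ |A|=k,\ A\text{ has no run of }m\text{ consecutive elements}\}=\binom{n-k+1,\;m}{k}.
\]

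To prove this identity I would use the standard ``gaps and blocks'' encoding. Writing the complement $[n]\setminus A$ as an ordered sequence of $n-k$ elements creates $n-k+1$ gaps (before the first, between consecutive ones, and after the last) into which the $k$ chosen elements of $A$ must be distributed; $A$ avoids a run of $m$ consecutive integers exactly when each gap receives at most $m-1$ elements of $A$. Therefore the count equals the number of solutions of $y_1+\cdots+y_{n-k+1}=k$ with $0\le y_i\le m-1$, which is precisely the coefficient of $x^k$ in $(1+x+\cdots+x^{m-1})^{\,n-k+1}$, i.e.\ $\binom{n-k+1,\;m}{k}$. (Alternatively, one can phrase this via a direct bijection between such subsets and lattice paths, or deduce it by inclusion–exclusion on which gaps overflow, which is exactly what yields the alternating-sum formulas of Theorem \ref{t31}.) The ``in particular'' statement is then immediate: $\alpha_k(I_{n,m})=\alpha_k(S)-\alpha_k(S/I_{n,m})=\binom{n}{k}-\binom{n-k+1,\;m}{k}$, since every squarefree monomial of degree $k$ is either in $I_{n,m}$ or in $S/I_{n,m}$.

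The only genuinely delicate point is the boundary bookkeeping in the gaps argument — being careful that empty leading/trailing gaps are counted, that the formula behaves correctly when $k=0$ (the empty set, giving $1$, and indeed the constant term of $(1+\cdots+x^{m-1})^{n+1}$ is $1$) and when $k>n-(m-1)\cdot\lceil\cdot\rceil$ forces the count to $0$, and that the case $m=1$ (where $I_{n,1}=\mathbf m$ and the only surviving monomial is $1$) is consistent, since then $(1+x+\cdots+x^{m-1})^{n-k+1}=1$ and the coefficient of $x^k$ is $0$ for $k\ge 1$. I expect the combinatorial reduction ``$x_A\in I_{n,m}\iff A$ contains $m$ consecutive integers'' to be the conceptual heart, while the gaps-and-blocks count is routine once set up correctly.
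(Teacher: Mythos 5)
Your proposal is correct and is essentially the paper's own argument: both identify $\alpha_k(S/I_{n,m})$ with the number of sequences $(a_1,\ldots,a_{n-k+1})$, $0\le a_i\le m-1$, $\sum a_i=k$, via the same bijection — your ``gaps created by the $n-k$ complement elements'' are exactly the blocks $u_i$ of lengths $a_i$ separated by single unused variables in the paper's explicit construction of $u_{\mathbf a}$. The reduction to ``$A$ contains no $m$ consecutive integers'' and the concluding subtraction $\binom{n}{k}-\binom{n-k+1,\;m}{k}$ likewise match the paper.
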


\begin{proof}
Note that the coefficient of $x^k$ from the expansion $(1+x+\cdots+x^{m-1})^{n-k+1}$ is equal
to the number of sequences $(a_1,a_2,\ldots,a_{n-k+1})$ with $a_i\in\{0,1,\ldots,m-1\}$ for all $1\leq i\leq n-k+1$,
such that $a_1+a_2+\cdots+a_{n-k+1}=k$. Therefore, in order to complete the proof,
it is enough to establish a 1-to-1 correspondence between the squarefree monomials $u\in S\setminus I_{n,m}$ of degree $k$.

Indeed, given a sequence $\mathbf a=(a_1,a_2,\ldots,a_{n-k+1})$ as above, we define a monomial $u_{\mathbf a}$ as follows:
We let $A_j:=\sum_{i=1}^j a_i$ for all $1\leq j\leq n-k$, and $A_0=0$. We define
$$u_{\mathbf a}=\prod_{i=1}^{n-k+1} u_i,\;\text{ where }u_i=\begin{cases} 1,& a_i=0 \\ x_{i+A_i}x_{i+1+A_i}\cdots x_{i+a_i-1+A_i},&a_i>0 \end{cases}.$$
It is clear that $\deg(u_{\mathbf a})=k$ and $u\notin I_{n,m}$, since, by construction, there is no monomial of the
form $x_ix_{i+1}\cdots x_{i+m-1}$ which divides $u_{\mathbf a}$.

Conversely, let $u\in S\setminus I_{n,m}$ be a squarefree monomial of degree $k$. We can write $u$ as a product $u=w_1w_2\cdots w_t$, where
$w_j = x_{i_j}x_{i_j+1}\cdots x_{i_j+b_j-1}$ for all $1\leq j\leq t$ such that:
$$1\leq i_1 < i_2 <\cdots <i_t \leq n-b_t+1,\;b_1+\cdots+b_t=k\text{ and }i_j+b_j < i_{j+1}\text{ for all }1\leq j\leq t-1.$$
Note that, since $u\notin I_{n,m}$, we have that $b_j=\deg(w_j)\leq m-1$ for all $1\leq j\leq t$.

We construct a sequence $\mathbf a=(a_1,\ldots,a_{n-k+1})$ as follows:

We let $a_{i_j-j+1}=b_j$ for all $1 \leq j\leq t$ and $a_i=0$ whenever $i\neq i_j-j+1$ for all $1\leq j\leq t$. It is easy
to see that $\sum_{i=1}^{n-k+1}a_i=k$ and $a_i\in\{0,1,\ldots,m-1\}$ for all $1\leq i\leq n-k+1$. 
Moreover, we have that $u=u_{\mathbf a}$. Hence, the proof is complete.
\end{proof}

\begin{exm}\rm
Let $n=7$, $m=3$ and $k=4$. According to Proposition \ref{aknm}, $\alpha_4(S/I_{7,3})=$ the
coefficient of $x^4$ in the expansion of $(1+x+x^2)^4$. By straightforward computations, we get $\alpha_4(S/I_{7,3})=19$.
Let $\mathbf a=(0,1,1,2)$ be a sequence,
as in the proof of Proposition \ref{aknm}. The corresponding monomial is $u=x_2x_4x_6x_7\in S\setminus I_{7,3}$.
Similarly, if $u'=x_1x_3x_4x_5\in S\setminus I_{7,3}$, then $u'=u_{\mathbf a'}$, where $\mathbf a'=(1,3,0,0)$.
\end{exm}

\begin{obs}\rm
Let $n\geq m\geq 1$ and $m\leq k\leq n$ be some integers such that $n\geq 2m$. 
Let $L_0:=I_{n,m}$ and $L_i:=L_{i-1}:x_{n-i+1}$ for $1\leq i\leq m-1$.
We consider the short exact sequences:
\begin{equation}\label{re1}
0\to S/L_i \stackrel{\cdot x_{n-i+1}}{\rightarrow} S/L_{i-1} \to S/(L_{i-1},x_{n-i+1}) \to 0\text{ for }1\leq i\leq m-1.
\end{equation}
We denote $S_j:=K[x_1,\ldots,x_j]$ for any $1\leq j\leq n$. We have that 
\begin{equation}\label{re2}
S/(L_{i-1},x_{n-i+1})\cong S/(I_{n-i,m}S,x_{n-i+1}) = (S_{n-i}/I_{n-i,m})[x_{n-i+2},\ldots,x_n]\text{ for }1\leq i\leq m-1,
\end{equation}
Also $L_{m-1}=(I_{n-m,m},x_{n-m+1})$ and therefore
\begin{equation}\label{re3}
S/L_{m-1}\cong (S_{n-m}/I_{n-m,m})[x_{n-m+2},\ldots,x_n].
\end{equation}
From \eqref{re1}, \eqref{re2} and \eqref{re3} it follows that
\begin{equation}\label{re4}
\alpha_k(S/I_{n,m})=\alpha_{k}(S_{n-1}/I_{n-1,m})+\cdots+\alpha_{k-m+1}(S_{n-m}/I_{n-m,m}).
\end{equation}
Let $N:=n-k+1$. From Proposition \ref{aknm} and \eqref{re4} we reobtain the identity:
$$\binom{N,\;m}{k} = \binom{N-1,\;m}{k} + \binom{N-1,\;m}{k-1}+ \cdots + \binom{N-1,\;m}{k-m+1}.$$
\end{obs}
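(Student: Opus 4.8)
The plan is to prove the four assertions of the Remark in the order they are stated: the short exact sequences \eqref{re1}, the isomorphisms \eqref{re2}--\eqref{re3}, the additive recursion \eqref{re4}, and finally the displayed binomial identity, which will fall out by substituting Proposition \ref{aknm} into \eqref{re4}. First I would dispose of \eqref{re1}: since $L_i=L_{i-1}:x_{n-i+1}$, this is just the standard colon sequence $0\to S/(L:x)\xrightarrow{\cdot x}S/L\to S/(L,x)\to 0$ for $L=L_{i-1}$ and $x=x_{n-i+1}$, the left map being injective because the kernel of multiplication by $x$ on $S/L$ is exactly $(L:x)/L$. To make everything computable I would first record the generators of each $L_i$ explicitly. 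Iterating the colon gives $L_i=I_{n,m}:(x_{n-i+1}x_{n-i+2}\cdots x_n)$, and since the colon of a monomial ideal by a monomial is taken generator-by-generator on the path generators $w_j:=x_jx_{j+1}\cdots x_{j+m-1}$, each $w_j$ is left unchanged when $j\le n-i-m+1$ and is truncated to $x_jx_{j+1}\cdots x_{n-i}$ when $n-i-m+2\le j\le n-m+1$. In particular $L_i$ involves only the variables $x_1,\ldots,x_{n-i}$, a fact I will use crucially below.

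Next I would apply this generator description with $i$ replaced by $i-1$ and reduce modulo $x_{n-i+1}$. Every truncated generator of $L_{i-1}$ ends in $x_{n-i+1}$, and so does the last untruncated one $w_{n-i-m+2}=x_{n-i-m+2}\cdots x_{n-i+1}$; all of these vanish modulo $x_{n-i+1}$, while the survivors $w_1,\ldots,w_{n-i-m+1}$ are precisely the generators of $I_{n-i,m}$ in $x_1,\ldots,x_{n-i}$. This gives $(L_{i-1},x_{n-i+1})=(I_{n-i,m}S,x_{n-i+1})$ and hence \eqref{re2}, the free variables being $x_{n-i+2},\ldots,x_n$. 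For \eqref{re3} I would take $i=m-1$: now the truncated generators together with $w_{n-2m+2}$ are all divisible by $x_{n-m+1}$, so they are absorbed into $(x_{n-m+1})$, leaving $L_{m-1}=(I_{n-m,m},x_{n-m+1})$ and the stated isomorphism.

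The hard part will be extracting the purely additive recursion \eqref{re4} from \eqref{re1}--\eqref{re3}, because the map $\cdot x_{n-i+1}$ shifts degree and destroys squarefreeness, so the counts $\alpha_k$ cannot simply be read off additively from the sequence. My plan is to count squarefree monomials directly and telescope. For $0\le i\le m-1$ I set
\[
B_i:=\#\{u\ \text{squarefree}:\deg u=k-i,\ x_n,\ldots,x_{n-i+1}\nmid u,\ u\notin L_i\},
\]
so that $B_0=\alpha_k(S/I_{n,m})$. Splitting the monomials counted by $B_{i-1}$ according to whether $x_{n-i+1}$ divides them, the indivisible ones involve only $x_1,\ldots,x_{n-i}$ and, since the only generators of $L_{i-1}$ avoiding $x_{n-i+1}$ are $w_1,\ldots,w_{n-i-m+1}$, they lie outside $L_{i-1}$ iff they lie outside $I_{n-i,m}$; these contribute $\alpha_{k-i+1}(S_{n-i}/I_{n-i,m})$. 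The divisible ones, written $u=x_{n-i+1}u'$, are in bijection via $u\mapsto u'$ with the monomials counted by $B_i$, using $u\notin L_{i-1}\Leftrightarrow u'\notin L_{i-1}:x_{n-i+1}=L_i$ together with the fact that $x_{n-i+1}$ divides no generator of $L_i$. This yields $B_{i-1}=\alpha_{k-i+1}(S_{n-i}/I_{n-i,m})+B_i$, while the base case $B_{m-1}=\alpha_{k-m+1}(S_{n-m}/I_{n-m,m})$ follows from \eqref{re3}. Telescoping over $i=1,\ldots,m-1$ gives \eqref{re4}; here the hypothesis $n\ge 2m$ ensures $n-i\ge m$ for all $i\le m$, so each $I_{n-i,m}$ is a genuine path ideal and Proposition \ref{aknm} applies to it.

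Finally I would substitute Proposition \ref{aknm} into \eqref{re4}. Since $\alpha_{k-i+1}(S_{n-i}/I_{n-i,m})=\binom{n-k,\;m}{k-i+1}$, writing $N=n-k+1$ collapses \eqref{re4} into $\binom{N,\;m}{k}=\sum_{i=1}^{m}\binom{N-1,\;m}{k-i+1}$, which is exactly the asserted identity. The only genuinely delicate step is the third paragraph, where the squarefree bookkeeping under the degree-raising map must be tracked carefully; it is tamed precisely by the observation that $L_i$ lives in $x_1,\ldots,x_{n-i}$.
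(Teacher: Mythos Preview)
Your argument is correct and follows the same route as the paper's Remark, which is itself only a proof sketch: iterate colon ideals, identify the successive quotients with smaller path-ideal quotients, and telescope. Your extra care in deriving \eqref{re4}---replacing the paper's bare ``from \eqref{re1}, \eqref{re2} and \eqref{re3} it follows'' by an explicit squarefree-monomial count via the sets $B_i$---is a genuine gain in rigor, but it is exactly the multigraded content of those exact sequences restricted to $\{0,1\}^n$-degrees, so the two arguments coincide in substance.
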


\begin{lema}(See also \cite[Page 77]{comtet})\label{kruk}
Let $n\geq m\geq 1$ and $0\leq k\leq n$ be some integers. Then:
$$ \binom{n-k+1,\;m}{k}= \sum_{\ell=0}^{\left\lfloor \frac{k}{m} \right\rfloor} (-1)^{\ell} \binom{n-k+1}{\ell}\binom{n-m\ell}{k-m\ell}.$$
\end{lema}

\begin{proof}
We have that
\begin{equation}\label{cruc}
 (1+t+\cdots+t^{m-1})^{n-k+1} = \left(\frac{1-t^m}{1-t}\right)^{n-k+1} = \sum_{\ell=0}^{n-k+1}(-1)^{\ell}\binom{n-k+1}{\ell}t^{m\ell}
  \sum_{j=0}^{\infty}\binom{n+j-1}{j}t^j.
\end{equation}
Identifying the coefficient of $t^k$ in \eqref{cruc} we deduce the required conclusion.
\end{proof}

We also recall the following combinatorial identity
\begin{equation}\label{chuv}
\sum_{j=0}^k (-1)^{k-j}\binom{d-j}{k-j}\binom{n}{j} = \binom{n-d+k-1}{k},
\end{equation}
which is a direct consequence of the Chu-Vandermonde summation.

\begin{teor}\label{t31}
Let $n\geq m\geq 1$ and $d:=\varphi(n,m)=n+1-\left\lfloor \frac{n+1}{m+1} \right\rfloor-\left\lceil \frac{n+1}{m+1} \right\rceil$. We have that:
\begin{enumerate}
\item[(1)] $\sum\limits_{j=0}^k (-1)^{k-j} \binom{d-j}{k-j}\binom{n-j+1,\; m}{j} = \sum\limits_{\ell=0}^{\left\lfloor \frac{k}{m} \right\rfloor} (-1)^{\ell}
\sum\limits_{j=m\ell}^k (-1)^{k-j} \binom{d-j}{k-j}\binom{n-j+1}{\ell}\binom{n-m\ell}{j-m\ell}$, $0\leq k\leq d$.
\item[(2)] $ \sum\limits_{j=0}^k (-1)^{k-j} \binom{d-j}{k-j}\binom{n-j+1,\; m}{j} \geq 0$, for all $0\leq k\leq d$.
\item[(3)] $ \sum\limits_{j=0}^k (-1)^{k-j} \binom{d+1-j}{k-j} \binom{n-j+1,\; m}{j} \leq \binom{n-d+k-2}{k} $, for all $0\leq k\leq d+1$.
\end{enumerate}
\end{teor}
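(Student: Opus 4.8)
The plan is to identify the three assertions as instances of the general machinery of Section~2, specialized to $M = S/I_{n,m}$ and then combined with the structural results \eqref{e32}, \eqref{e33}. Observe first that part~(1) is purely formal: by Proposition~\ref{aknm} we have $\alpha_j(S/I_{n,m}) = \binom{n-j+1,\;m}{j}$, and substituting the closed form of Lemma~\ref{kruk} (with $n$ there replaced by $n-j+1$, so that $n-k+1$ becomes $n-j+1$ and the exponent becomes $n-j+1$) into the left-hand side, then interchanging the two finite sums, produces exactly the right-hand side. So (1) requires no inequality input at all — only bookkeeping of the summation ranges, noting that the inner sum over $j$ starts at $j = m\ell$ because $\binom{n-m\ell}{j-m\ell}$ vanishes for $j < m\ell$.

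For part~(2), I would argue that $\sum_{j=0}^k (-1)^{k-j}\binom{d-j}{k-j}\binom{n-j+1,\;m}{j} = \beta_k^d(S/I_{n,m})$ by the very definition \eqref{betak}, where $d = \varphi(n,m) = \sdepth(S/I_{n,m})$ by \eqref{e32}. Then by Proposition~\ref{p1} and Theorem~\ref{d1}, $\qdepth(S/I_{n,m}) \geq \sdepth(S/I_{n,m}) = d$, which by the characterization in Theorem~\ref{d1} forces $\beta_k^d(S/I_{n,m}) \geq 0$ for all $0 \leq k \leq d$. This is exactly the claimed inequality.

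For part~(3), the idea is to play the same game one level up, using $I_{n,m}$ itself rather than the quotient. Since the displayed sum with $\binom{d+1-j}{k-j}$ equals $\beta_k^{d+1}(S/I_{n,m})$, I want to relate it to $\beta_k^{d+1}(I_{n,m})$ and then use \eqref{e33}, i.e. $\sdepth(I_{n,m}) \geq d+1$, hence $\qdepth(I_{n,m}) \geq d+1$, hence $\beta_k^{d+1}(I_{n,m}) \geq 0$ for $0 \leq k \leq d+1$. By Proposition~\ref{aknm}, $\alpha_j(I_{n,m}) = \binom{n}{j} - \binom{n-j+1,\;m}{j}$, so
$$\beta_k^{d+1}(I_{n,m}) = \sum_{j=0}^k (-1)^{k-j}\binom{d+1-j}{k-j}\binom{n}{j} - \sum_{j=0}^k (-1)^{k-j}\binom{d+1-j}{k-j}\binom{n-j+1,\;m}{j}.$$
The first sum evaluates by \eqref{chuv} (with $d$ replaced by $d+1$) to $\binom{n-(d+1)+k-1}{k} = \binom{n-d+k-2}{k}$. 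Rearranging $\beta_k^{d+1}(I_{n,m}) \geq 0$ therefore gives precisely assertion~(3).

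The only real subtlety — and the step I expect to need the most care — is the bookkeeping in part~(1): keeping straight that Lemma~\ref{kruk} is applied with its parameter $n$ set to $n-j+1$ (matching the $j$-dependent upper index in $\binom{n-j+1,\;m}{j}$), verifying that the resulting $\binom{n-j+1}{\ell}$ and $\binom{n-m\ell}{j-m\ell}$ factors are exactly those appearing on the right, and checking that the outer index $\ell$ may be taken up to $\lfloor k/m\rfloor$ uniformly (larger $\ell$ contribute nothing since $\binom{n-m\ell}{j-m\ell} = 0$ once $m\ell > j$, and $j \leq k$). Everything else reduces to invoking \eqref{e32}, \eqref{e33}, Proposition~\ref{p1}, Theorem~\ref{d1}, Proposition~\ref{aknm}, and the Chu--Vandermonde identity \eqref{chuv}.
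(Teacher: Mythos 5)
Your proposal is correct and follows essentially the same route as the paper: part (1) by substituting Lemma \ref{kruk} into the left-hand side (the paper simply says it ``follows immediately'' from that lemma), part (2) by identifying the sum with $\beta_k^d(S/I_{n,m})$ and invoking \eqref{e32}, Proposition \ref{p1} and Theorem \ref{d1}, and part (3) by computing $\beta_k^{d+1}(I_{n,m})\geq 0$ via \eqref{e33}, Proposition \ref{aknm} and the Chu--Vandermonde evaluation \eqref{chuv}. The only nitpick is that in (1) the substitution in Lemma \ref{kruk} is $k\mapsto j$ (which automatically turns $n-k+1$ into $n-j+1$), not a replacement of $n$; the factors you arrive at are nonetheless the correct ones.
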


\begin{proof}
(1) It follows immediately from Lemma \ref{kruk}.

(2) From Proposition \ref{p1}, Proposition \ref{aknm}, \eqref{betak} and \eqref{e32} it follows that 
    $$ \beta_k^d(S/I_{n,m})=\sum_{j=0}^k (-1)^{k-j} \binom{d-j}{k-j}\binom{n-j+1,\;m}{j} \geq 0,$$
    for all $0\leq k\leq d$, as required.

(3) It follows from Proposition \ref{p1}, Proposition \ref{aknm}, \eqref{betak}, \eqref{chuv} and \eqref{e33}.
\end{proof}

\begin{cor}\label{caz2-1}(Case $m=2$)
Let $n\geq 2$ be an integer. We have that
\begin{enumerate}

\item[(1)] $\sum\limits_{j=0}^k (-1)^{k-j} \binom{d-j}{k-j}\binom{n-j+1,\; m}{j} = \sum\limits_{\ell=0}^{\left\lfloor \frac{k}{m} \right\rfloor} (-1)^{\ell}
\sum\limits_{j=m\ell}^k (-1)^{k-j} \binom{d-j}{k-j}\binom{n-j+1}{\ell}\binom{n-m\ell}{j-m\ell}$, $0\leq k\leq d$.

\item[(1)] $\sum\limits_{j=0}^k (-1)^{k-j} \binom{\left\lceil \frac{n}{3} \right\rceil-j}{k-j}\binom{n-j+1}{j} \geq 0,\text{ for all }0\leq k\leq \left\lceil \frac{n}{3} \right\rceil.$
\item[(2)] $\sum\limits_{j=0}^k (-1)^{k-j} \binom{\left\lceil \frac{n}{3} \right\rceil+1-j}{k-j}\binom{n-j+1}{j}
\leq \binom{ \left\lfloor \frac{2n}{3} \right\rfloor + k - 2}{k} 
,\text{ for all }0\leq k\leq \left\lceil \frac{n}{3} \right\rceil+1.$
\end{enumerate}
\end{cor}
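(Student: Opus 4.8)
The plan is to read off the corollary directly from Theorem \ref{t31} by specializing to $m=2$; the only work is to translate the quantities $\binom{N,\;2}{k}$, $\varphi(n,2)$ and $n-\varphi(n,2)$ into elementary closed forms.

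First I would note that for $m=2$ the polynomial $1+x+\cdots+x^{m-1}$ is just $1+x$, so by the very definition of the bracketed symbol, $\binom{N,\;2}{k}$ is the coefficient of $x^k$ in $(1+x)^N$; that is, $\binom{N,\;2}{k}=\binom{N}{k}$ for all $N\geq 0$ and all $k$. In particular $\binom{n-j+1,\;2}{j}=\binom{n-j+1}{j}$, so every sum occurring in Theorem \ref{t31} becomes a sum of products of ordinary binomial coefficients; item (1) of the corollary is then item (1) of Theorem \ref{t31} after this substitution and needs no further argument.

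Next I would identify the parameter $d=\varphi(n,2)$. Writing $n=3q+r$ with $r\in\{0,1,2\}$ and evaluating $\left\lfloor\frac{n+1}{3}\right\rfloor$ and $\left\lceil\frac{n+1}{3}\right\rceil$ in each of the three cases, one checks from \eqref{e31} that $\varphi(n,2)=\left\lceil\frac{n}{3}\right\rceil$ for every $n\geq 2$. A similar short case distinction gives $n-\left\lceil\frac{n}{3}\right\rceil=\left\lfloor\frac{2n}{3}\right\rfloor$, hence $n-d-2+k=\left\lfloor\frac{2n}{3}\right\rfloor+k-2$.

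Substituting $d=\left\lceil\frac{n}{3}\right\rceil$, $\binom{n-j+1,\;2}{j}=\binom{n-j+1}{j}$ and $n-d-2+k=\left\lfloor\frac{2n}{3}\right\rfloor+k-2$ into parts (2) and (3) of Theorem \ref{t31} yields exactly the second displayed inequality (for $0\leq k\leq\left\lceil\frac{n}{3}\right\rceil$) and the third displayed inequality (for $0\leq k\leq\left\lceil\frac{n}{3}\right\rceil+1$) of the corollary. There is no real obstacle here: the whole argument is bookkeeping with floor and ceiling functions, and the only place where one must be slightly careful is checking the three residue classes of $n$ modulo $3$ when simplifying $\varphi(n,2)$ and $n-\varphi(n,2)$.
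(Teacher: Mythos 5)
Your proposal is correct and follows essentially the same route as the paper: the paper's proof likewise observes that $\binom{n-k+1,\;2}{k}=\binom{n-k+1}{k}$ via Proposition \ref{aknm} and then invokes Theorem \ref{t31}, leaving the identifications $\varphi(n,2)=\left\lceil \frac{n}{3}\right\rceil$ and $n-\varphi(n,2)=\left\lfloor \frac{2n}{3}\right\rfloor$ implicit. Your case analysis modulo $3$ simply makes that bookkeeping explicit.
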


\begin{proof}
(1) From Proposition \ref{aknm}, it follows that $\alpha_k(S/I_{n,2})=\binom{n-k+1}{k}$ for all $0\leq k\leq n$.
The conclusion follows from Theorem \ref{t31}.
\end{proof}


\section{The $m$-path ideal of a cycle graph}

Let $n > m\geq 2$ be two integer and
$$J_{n,m}=I_{n,m}+(x_{n-m+2}\cdots x_nx_1,\ldots,x_nx_1\cdots x_{m-1})\subset S,$$
the $m$-path ideal associated to the cycle graph of length $n$.
According to \cite[Theorem 1.4]{ciclu} we have that 
\begin{equation}\label{eq31}
 \varphi(n,m)\geq \sdepth(S/J_{n,m}) \geq \depth(S/J_{n,m})=\varphi(n-1,m).
\end{equation}
Also, according to \cite[Proposition 1.6]{ciclu} we have that 
\begin{equation}\label{eq32}
\sdepth(J_{n,m}/I_{n,m}) \geq \depth(J_{n,m}/I_{n,m})\geq \varphi(n-1,m)+m-1.
\end{equation}

\begin{prop}\label{ppp}
Let $n>m\geq 2$ be two integers. We have that:
\begin{enumerate}
\item[(1)] $\sdepth(J_{n,m})\geq \min\{ \varphi(n-1,m)+m-1,\; \varphi(n,m)+1 \}$.
\item[(2)] $\sdepth(J_{n,m})\geq \depth(J_{n,m}) = \varphi(n-1,m)+1$. 
\item[(3)] If $m\geq 3$ then $\sdepth(J_{n,m})\geq \varphi(n,m)+1$, so
            $\sdepth(J_{n,m})\geq \sdepth(S/J_{n,m})+1$.   
\end{enumerate}
\end{prop}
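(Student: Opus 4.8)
The plan is to use the short exact sequence of $\mathbb{Z}^{n}$-graded $S$-modules
$$0 \longrightarrow I_{n,m} \longrightarrow J_{n,m} \longrightarrow J_{n,m}/I_{n,m} \longrightarrow 0,$$
which is available since $I_{n,m}\subsetneq J_{n,m}$ by construction. For (1) I would invoke the well-known estimate $\sdepth(M)\geq\min\{\sdepth(M'),\sdepth(M'')\}$, valid for any short exact sequence $0\to M'\to M\to M''\to 0$ of $\mathbb{Z}^{n}$-graded modules (this is Rauf's lemma; it is not proved in the excerpt and would be quoted). Feeding in the lower bound $\sdepth(I_{n,m})\geq\varphi(n,m)+1$ from \eqref{e33} and the lower bound $\sdepth(J_{n,m}/I_{n,m})\geq\varphi(n-1,m)+m-1$ from \eqref{eq32} immediately gives $\sdepth(J_{n,m})\geq\min\{\varphi(n-1,m)+m-1,\;\varphi(n,m)+1\}$, which is (1).

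For (2) I would first compute $\depth(J_{n,m})$. Applying the Depth Lemma to $0\to J_{n,m}\to S\to S/J_{n,m}\to 0$, with $\depth(S)=n$ and $\depth(S/J_{n,m})=\varphi(n-1,m)<n$ by \eqref{eq31}, yields $\depth(J_{n,m})=\depth(S/J_{n,m})+1=\varphi(n-1,m)+1$ (the standard fact $\depth I=\depth S/I+1$ for a nonzero proper ideal). It then remains to see that the minimum in (1) is at least $\varphi(n-1,m)+1$. Since $m\geq 2$ the first term is $\geq\varphi(n-1,m)+1$, and for the second term it suffices to prove $\varphi(n,m)\geq\varphi(n-1,m)$. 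This is elementary: from \eqref{e31},
$$\varphi(n,m)-\varphi(n-1,m)=1-\left(\left\lfloor\tfrac{n+1}{m+1}\right\rfloor-\left\lfloor\tfrac{n}{m+1}\right\rfloor\right)-\left(\left\lceil\tfrac{n+1}{m+1}\right\rceil-\left\lceil\tfrac{n}{m+1}\right\rceil\right),$$
and each of the two parenthesized differences lies in $\{0,1\}$, being equal to $1$ precisely when $m+1\mid n+1$ (for the floor) resp. $m+1\mid n$ (for the ceiling); these cannot both occur, so $\varphi(n,m)-\varphi(n-1,m)\in\{0,1\}$. In particular $\varphi(n,m)\geq\varphi(n-1,m)$, hence (1) gives $\sdepth(J_{n,m})\geq\varphi(n-1,m)+1=\depth(J_{n,m})$, which is (2).

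For (3), suppose $m\geq 3$. The inequality $\varphi(n,m)-\varphi(n-1,m)\leq 1$ just proved gives $\varphi(n-1,m)+m-1\geq\varphi(n-1,m)+2\geq\varphi(n,m)+1$, so the minimum in (1) equals $\varphi(n,m)+1$ and therefore $\sdepth(J_{n,m})\geq\varphi(n,m)+1$. Since $\sdepth(S/J_{n,m})\leq\varphi(n,m)$ by \eqref{eq31}, we conclude $\sdepth(J_{n,m})\geq\varphi(n,m)+1\geq\sdepth(S/J_{n,m})+1$.

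The argument is short once the ingredients are in place: the only input not contained in the excerpt is the short exact sequence inequality for Stanley depth, and the one place demanding a bit of care is the elementary bound $0\leq\varphi(n,m)-\varphi(n-1,m)\leq 1$, where one must handle the floor and ceiling terms and verify they never jump simultaneously. I expect this floor/ceiling bookkeeping to be the main (minor) obstacle; everything else is a direct substitution of \eqref{e33}, \eqref{eq31} and \eqref{eq32}.
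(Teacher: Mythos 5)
Your proposal is correct and follows essentially the same route as the paper: the short exact sequence $0\to I_{n,m}\to J_{n,m}\to J_{n,m}/I_{n,m}\to 0$ together with Rauf's lemma and the bounds \eqref{e33}, \eqref{eq31}, \eqref{eq32}, then the comparison of the two terms in the minimum via $0\leq\varphi(n,m)-\varphi(n-1,m)\leq 1$. The only difference is that you explicitly verify the floor/ceiling monotonicity and the identity $\depth(J_{n,m})=\depth(S/J_{n,m})+1$, both of which the paper asserts without proof.
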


\begin{proof}
(1) We consider the short exact sequence $0\to I_{n,m} \to J_{n,m} \to \frac{J_{n,m}}{I_{n,m}} \to 0.$
    From \cite[Lemma 2.2]{asia}, \eqref{e33} and \eqref{eq32} we get the required result.
		
(2) It follows from (1) and \eqref{eq31}, since $\varphi(n,m)\geq \varphi(n-1,m)$ and $m\geq 2$.

(3) Since $m\geq 3$, it follows that $\varphi(n-1,m)+m-1\geq \varphi(n-1,m)+2\geq \varphi(n,m)+1$,
    hence, the first inequality follows from (1). The second inequality follows from \eqref{eq31}.
\end{proof}

\begin{prop}\label{jpei}
With the above notations, 
$$\alpha_k(J_{n,m}/I_{n,m}) = \sum_{\ell=m}^{2m-2} (2m-1-\ell) \binom{n-\ell-k+1,\;m}{k-\ell}.$$
\end{prop}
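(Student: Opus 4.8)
The plan is to translate $\alpha_k(J_{n,m}/I_{n,m})$ into a count of subsets of $[n]$, decompose these subsets along a canonical ``wrap\nobreakdash-around arc'', and reduce each piece to the path\nobreakdash-graph count of Proposition \ref{aknm}.

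First, $\alpha_k(J_{n,m}/I_{n,m})$ is the number of $C\subseteq[n]$ with $|C|=k$ and $x_C\in J_{n,m}\setminus I_{n,m}$. Writing $W_t:=\{n-m+1+t,\ldots,n\}\cup\{1,\ldots,t\}$ ($1\le t\le m-1$) for the $m-1$ wrap\nobreakdash-around $m$\nobreakdash-windows of the $n$\nobreakdash-cycle (whose monomials are exactly the extra generators of $J_{n,m}$), one has $x_C\in J_{n,m}$ iff $C$ contains some $m$\nobreakdash-window of the cycle, and $x_C\notin I_{n,m}$ iff $C$ contains no linear $m$\nobreakdash-window $\{i,i+1,\ldots,i+m-1\}$ with $1\le i\le n-m+1$. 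Hence $x_C\in J_{n,m}\setminus I_{n,m}$ iff $C$ contains at least one $W_t$ and no linear $m$\nobreakdash-window; call such $C$ \emph{admissible} (note $1,n\in C$ and $C\neq[n]$).

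Next, to an admissible $C$ associate the maximal subset $A\subseteq C$ of the form $\{a,a+1,\ldots,n\}\cup\{1,\ldots,b\}$; it is well defined, contains every $W_t\subseteq C$, and by maximality $a-1\notin C$ and $b+1\notin C$. Because $C$ has no linear $m$\nobreakdash-window, the tail $\{a,\ldots,n\}$ and the head $\{1,\ldots,b\}$ of $A$ each have at most $m-1$ elements, while $A\supseteq W_t$ gives $|A|\ge m$; thus $\ell:=|A|\in\{m,\ldots,2m-2\}$, and for each such $\ell$ there are exactly $2m-1-\ell$ admissible arcs $A$ (one for each tail length in $\{\ell-m+1,\ldots,m-1\}$). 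Set $C':=C\setminus A\subseteq M:=\{b+2,\ldots,a-2\}$, a block of $n-\ell-2$ consecutive integers, with $|C'|=k-\ell$. The key step is to show $C$ has no linear $m$\nobreakdash-window iff $C'$ has no run of $m$ consecutive integers in $M$: a linear $m$\nobreakdash-window of $C$ must avoid $a-1$ and $b+1$, hence sits inside one of $\{1,\ldots,b\}$, $M$, $\{a,\ldots,n\}$, and the first and third are too short. This makes $C\mapsto (A,C')$ a bijection from admissible $k$\nobreakdash-sets onto pairs of an arc $A$ (of length $\ell\in\{m,\ldots,2m-2\}$) and a $(k-\ell)$\nobreakdash-subset of an $(n-\ell-2)$\nobreakdash-vertex path avoiding an $m$\nobreakdash-path. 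By definition the number of such $C'$ is $\alpha_{k-\ell}(S_{n-\ell-2}/I_{n-\ell-2,m})$, whose value is given by Proposition \ref{aknm}; summing over the $2m-1-\ell$ arcs of each length $\ell$ and inserting that value yields the stated closed form.

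The routine parts are the translation and the count of arc positions. The real work is the structural claim that every linear $m$\nobreakdash-window of an admissible set lies in the long middle block $M$: this uses that the tail and head of the canonical arc have length $\le m-1$ (from the no\nobreakdash-window hypothesis together with the fact that $\{a,\ldots,n\}$ and $\{1,\ldots,b\}$ are genuine intervals of $[n]$) and that $A$'s two neighbours are excluded from $C$ by maximality. One must also verify that the canonical arc is unique (so no admissible set is double counted), check the border positions $a=n-m+2$ and $b=m-1$ (a maximal tail, resp.\ head, abutting the possible windows $\{n-m+1,\ldots,n\}$, resp.\ $\{1,\ldots,m\}$), and note that when $n$ is small enough to make the middle block $M$ degenerate the corresponding summands vanish, consistently with the convention for $\binom{\cdot,\;m}{\cdot}$; each of these is immediate once the main picture is set.
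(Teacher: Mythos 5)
Your decomposition is exactly the one the paper uses: you phrase it with subsets and a canonical maximal wrap\nobreakdash-around arc, while the paper works with the run\nobreakdash-length sequences $\mathbf a$ from Proposition \ref{aknm}, but these are the same bijection. The combinatorial core --- the count $2m-1-\ell$ of admissible arcs of each length $\ell\in\{m,\ldots,2m-2\}$, and the identification of the remainder $C'$ with a $(k-\ell)$\nobreakdash-subset of a path on $n-\ell-2$ vertices having no $m$ consecutive elements --- is correct and, if anything, more carefully justified than in the paper. The gap is in your final sentence: inserting the value of $\alpha_{k-\ell}(S_{n-\ell-2}/I_{n-\ell-2,m})$ does \emph{not} yield the stated closed form. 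By Proposition \ref{aknm},
$$\alpha_{k-\ell}\bigl(S_{n-\ell-2}/I_{n-\ell-2,m}\bigr)=\binom{(n-\ell-2)-(k-\ell)+1,\;m}{k-\ell}=\binom{n-k-1,\;m}{k-\ell},$$
whereas the statement requires $\binom{n-\ell-k+1,\;m}{k-\ell}$. The upper arguments differ by $\ell-2$, so the two expressions coincide only when $\ell=2$, i.e., only in the case $m=2$.

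For $m\geq 3$ the discrepancy is real, not notational. Take $n=6$, $m=3$, $k=4$: the admissible sets are $\{1,2,5,6\}$, $\{1,3,5,6\}$, $\{1,2,4,6\}$, so $\alpha_4(J_{6,3}/I_{6,3})=3$; your intermediate formula gives $2\binom{1,\;3}{1}+\binom{1,\;3}{0}=2+1=3$, but the displayed right-hand side is $2\binom{0,\;3}{1}+\binom{-1,\;3}{0}$, which is $0+1=1$ (or $0$, depending on the convention for a negative upper argument) --- in any case not $3$. So your argument, carried to the end, proves $\alpha_k(J_{n,m}/I_{n,m})=\sum_{\ell=m}^{2m-2}(2m-1-\ell)\binom{n-k-1,\;m}{k-\ell}$ rather than the stated identity; the missing step ``$\binom{n-k-1,\;m}{k-\ell}=\binom{n-\ell-k+1,\;m}{k-\ell}$'' is false for $\ell>2$. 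You should be aware that the paper's own proof makes the identical leap (it derives that the middle sequence has length $n-k-1$ and sum $k-\ell$, then writes $\binom{n-\ell-k+1,\;m}{k-\ell}$ without comment), so the Proposition as printed appears to hold only for $m=2$, and the correction propagates to Propositions \ref{p32} and Theorem \ref{t33}.
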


\begin{proof}
As in the proof of Proposition \ref{aknm}, we put squarefree monomials of degree $k$ which are not
in $I_{n,m}$ in bijection with sequences of the form $\mathbf a=(a_1,\ldots,a_{n-k+1})$, where $a_i$'s are 
integers such that $0\leq a_i\leq m-1$ for all $i$ and $\sum_{i=1}^{n-k+1}a_i=k$. 
Let $u\in J_{n,m}\setminus I_{n,m}$ such that $\deg(u)=k$. Assume $u=u_{\mathbf a}$ for a sequence $\mathbf a$
as above. Since $u\in J_{n,m}$ it follows that $\ell:=a_1+a_{n-k+1}\geq m$. On the other hand, $\ell\leq 2(m-1)=2m-2$.
Also, for a given $\ell \in \{m,m+1,\ldots,2m-2\}$ there are exactly $(2m-1-\ell)$ pairs $(a_1,a_{n-k+1})$ such that
$a_1+a_{n-k+1}=\ell$ and $0\leq a_1,a_{n-k+1} \leq m-1$. Note that the sequence $(a_2,a_3,\ldots,a_{n-k})$ has length $n-k-1$
and satisfy the conditions $0 \leq a_i \leq m-1$, for all $2\leq i\leq n-k$, and $\sum_{i=2}^{n-k}=k-\ell$. 
Therefore, there
are $\binom{n-\ell-k+1,\;m}{k-\ell}$ ways in which we can choose such sequences.
Since the monomial $u$ is uniquely determined by the pair $(a_0,a_{n-k+1})$ and the sequence $(a_2,a_3,\ldots,a_{n-k})$ as above,
we get the required conclusion.
\end{proof}

\begin{prop}\label{p32}
Let $n > m\geq 2$ and $0\leq k\leq n$ be some integers. We have that
\begin{enumerate}
\item[(1)] $\alpha_k(S/J_{n,m})=\binom{n-k+1,\;m}{k} -\sum\limits_{\ell=m}^{2m-2} (2m-1-\ell) \binom{n-\ell-k+1,\;m}{k-\ell}$.
\item[(2)] $\alpha_k(J_{n,m})=\binom{n}{k} - \binom{n-k+1,\;m}{k} + \sum\limits_{\ell=m}^{2m-2} (2m-1-\ell) \binom{n-\ell-k+1,\;m}{k-\ell}$.
\end{enumerate}
\end{prop}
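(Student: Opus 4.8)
The plan is to derive both formulae of Proposition \ref{p32} directly from the already-established combinatorial facts, using only the additivity of the counting functions $\alpha_k$ across a partition of $2^{[n]}$ into the three pieces determined by membership in $I_{n,m}$ and $J_{n,m}$. Concretely, for every $0\le k\le n$ the squarefree monomials of degree $k$ split into those lying in $I_{n,m}$, those lying in $J_{n,m}\setminus I_{n,m}$, and those lying in $S\setminus J_{n,m}$; since $\binom{n}{k}$ counts all of them, I get the bookkeeping identity
\begin{equation}\label{split-eq}
\binom{n}{k} = \alpha_k(I_{n,m}) + \alpha_k(J_{n,m}/I_{n,m}) + \alpha_k(S/J_{n,m}),
\end{equation}
and similarly $\alpha_k(S/I_{n,m}) = \alpha_k(J_{n,m}/I_{n,m}) + \alpha_k(S/J_{n,m})$ together with $\alpha_k(J_{n,m}) = \alpha_k(I_{n,m}) + \alpha_k(J_{n,m}/I_{n,m})$. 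These are immediate because $[C,D]$-type interval counting is irrelevant here — we are only counting the vertices of $2^{[n]}$ of each cardinality in each of three disjoint subsets whose union is $2^{[n]}$.

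Granting that, the proof of part (1) is just substitution: by Proposition \ref{aknm} we have $\alpha_k(S/I_{n,m}) = \binom{n-k+1,\;m}{k}$, and by Proposition \ref{jpei} we have $\alpha_k(J_{n,m}/I_{n,m}) = \sum_{\ell=m}^{2m-2}(2m-1-\ell)\binom{n-\ell-k+1,\;m}{k-\ell}$, so
$$\alpha_k(S/J_{n,m}) = \alpha_k(S/I_{n,m}) - \alpha_k(J_{n,m}/I_{n,m}) = \binom{n-k+1,\;m}{k} - \sum_{\ell=m}^{2m-2}(2m-1-\ell)\binom{n-\ell-k+1,\;m}{k-\ell}.$$
For part (2), I would again use Proposition \ref{aknm}, which gives $\alpha_k(I_{n,m}) = \binom{n}{k} - \binom{n-k+1,\;m}{k}$, and add $\alpha_k(J_{n,m}/I_{n,m})$ from Proposition \ref{jpei} to obtain the stated expression; alternatively one rearranges \eqref{split-eq} and plugs in part (1). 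Either route is a one-line algebraic manipulation.

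Honestly, there is no real obstacle: the substantive content was already done in Propositions \ref{aknm} and \ref{jpei}, and this proposition is their additive corollary. The only point deserving a sentence of care is the disjointness-and-exhaustiveness claim \eqref{split-eq}: one should note that $J_{n,m}\supset I_{n,m}$ (clear from the definition, since $J_{n,m} = I_{n,m} + (\text{extra generators})$), so that $\{$squarefree monomials in $I_{n,m}\}$, $\{$those in $J_{n,m}$ but not $I_{n,m}\}$, $\{$those not in $J_{n,m}\}$ genuinely partition the set of all squarefree monomials, and restricting to degree $k$ preserves the partition. After that remark, both identities follow by adding the relevant two of the three block-counts, and the proof is complete.
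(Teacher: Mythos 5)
Your proof is correct and is essentially identical to the paper's: the authors likewise obtain (1) from the identity $\alpha_k(S/J_{n,m})=\alpha_k(S/I_{n,m})-\alpha_k(J_{n,m}/I_{n,m})$ combined with Propositions \ref{aknm} and \ref{jpei}, and (2) from $\alpha_k(J_{n,m})=\binom{n}{k}-\alpha_k(S/J_{n,m})$. Your extra remark on the disjointness of the three counting classes is a harmless elaboration of what the paper calls ``the obvious fact.''
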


\begin{proof}
(1) It follows from Proposition \ref{jpei}, Proposition \ref{aknm} and the obvious fact that
$$\alpha_k(S/J_{n,m})=\alpha_k(S/I_{n,m})-\alpha_k(J_{n,m}/I_{n,m}).$$
(2) If follows from (1) and the fact that $\alpha_k(J_{n,m})=\binom{n}{k}-\alpha_k(S/J_{n,m})$.
\end{proof}

\begin{teor}\label{t33}
Let $n> m \geq 2$ be some integers. Let $d=n - \left\lfloor \frac{n}{m+1} \right\rfloor - \left\lceil \frac{n}{m+1} \right\rceil$.
We have
\begin{enumerate}
\item[(1)] $\sum\limits_{j=0}^k (-1)^{k-j} \binom{d-j}{k-j}\left( \binom{n-j+1,\; m}{j} - 
\sum\limits_{\ell=m}^{2m-2} (2m-1-\ell) \binom{n-\ell-j+1,\;m}{j-\ell}  \right) \geq 0$ for $0\leq k\leq d$.
\item[(2)] $\sum\limits_{j=0}^k (-1)^{k-j} \binom{d+m-1-j}{k-j}
\sum\limits_{\ell=m}^{2m-2} (2m-1-\ell) \binom{n-\ell-j+1,\;m}{j-\ell}  \geq 0$ for all $0\leq k\leq d+m-1$.
\item[(3)] $\sum\limits_{j=0}^k (-1)^{k-j} \binom{d+1-j}{k-j}
\left( \binom{n-j+1,\;m}{j} - \sum\limits_{\ell=m}^{2m-2} (2m-1-\ell) \binom{n-\ell-j+1,\;m}{j-\ell} \right) \leq 
\binom{n-d+k-2}{k}$, 

\end{enumerate}
\end{teor}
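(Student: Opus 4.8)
The plan is to read each of the three inequalities as the nonnegativity of a single $\beta^{\bullet}_{k}$-number attached to one of the modules $S/J_{n,m}$, $J_{n,m}/I_{n,m}$, $J_{n,m}$, after substituting the closed formulas for the $\alpha_k$'s obtained in Propositions \ref{aknm}, \ref{jpei} and \ref{p32}, and then to conclude via Proposition \ref{p1} (Hilbert depth bounds Stanley depth) together with the external depth/Stanley-depth estimates \eqref{eq31}, \eqref{eq32} and Proposition \ref{ppp}. Throughout one uses that $d=\varphi(n-1,m)$, which follows at once from \eqref{e31}.

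For (1): by Proposition \ref{p32}(1) the quantity in the outer parenthesis is exactly $\alpha_j(S/J_{n,m})$, so the left-hand side of (1) is $\beta^{d}_k(S/J_{n,m})$ in the sense of \eqref{betak}. Since $\sdepth(S/J_{n,m})\ge\depth(S/J_{n,m})=\varphi(n-1,m)=d$ by \eqref{eq31}, Proposition \ref{p1} gives $\qdepth(S/J_{n,m})\ge d$, hence $\beta^{d}_k(S/J_{n,m})\ge 0$ for all $0\le k\le d$ by Theorem \ref{d1}; this is exactly the same mechanism as in the proof of Theorem \ref{t31}(2). For (2): by Proposition \ref{jpei} the inner double sum is $\alpha_j(J_{n,m}/I_{n,m})$, so the left-hand side is $\beta^{d+m-1}_k(J_{n,m}/I_{n,m})$; since \eqref{eq32} gives $\sdepth(J_{n,m}/I_{n,m})\ge\varphi(n-1,m)+m-1=d+m-1$, Proposition \ref{p1} and Theorem \ref{d1} force $\beta^{d+m-1}_k(J_{n,m}/I_{n,m})\ge 0$ for $0\le k\le d+m-1$. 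For (3): by Proposition \ref{p32}(2) we have $\alpha_j(J_{n,m})=\binom{n}{j}-\bigl(\binom{n-j+1,\;m}{j}-\sum_{\ell=m}^{2m-2}(2m-1-\ell)\binom{n-\ell-j+1,\;m}{j-\ell}\bigr)$, so by linearity of the alternating sum in \eqref{betak}, $\beta^{d+1}_k(J_{n,m})$ equals $\sum_{j=0}^{k}(-1)^{k-j}\binom{d+1-j}{k-j}\binom{n}{j}$ minus the left-hand side of (3). The first term is $\binom{n-d+k-2}{k}$ by \eqref{chuv} applied with $d$ replaced by $d+1$, and Proposition \ref{ppp}(2) gives $\sdepth(J_{n,m})\ge\depth(J_{n,m})=\varphi(n-1,m)+1=d+1$, so Proposition \ref{p1} and Theorem \ref{d1} yield $\beta^{d+1}_k(J_{n,m})\ge 0$ for $0\le k\le d+1$; rearranging produces (3).

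The arguments above are bookkeeping once the formulas for the $\alpha_k$'s are available, so the genuine content sits upstream — in the bijective count behind Proposition \ref{jpei} and in the external inequalities \eqref{eq31}, \eqref{eq32} and Proposition \ref{ppp}. The one point deserving explicit care is the implication ``$\qdepth(M)\ge d_0$ implies $\beta^{d_0}_k(M)\ge 0$ for all $0\le k\le d_0$'', i.e.\ that the defining sign condition in Theorem \ref{d1} is inherited by all smaller values of the parameter; this monotonicity is standard and is already used tacitly in the proof of Theorem \ref{t31}, and it suffices to check, as we did, that the chosen parameters $d$, $d+m-1$, $d+1$ do not exceed the respective Hilbert depths — which is precisely what \eqref{eq31}, \eqref{eq32} and Proposition \ref{ppp}(2) guarantee.
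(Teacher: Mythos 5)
Your proposal is correct and follows exactly the paper's argument: identify each left-hand side as a $\beta$-invariant of $S/J_{n,m}$, $J_{n,m}/I_{n,m}$, or $J_{n,m}$ via Propositions \ref{p32} and \ref{jpei}, then invoke the depth/Stanley depth lower bounds \eqref{eq31}, \eqref{eq32}, Proposition \ref{ppp}(2) together with Proposition \ref{p1}, using \eqref{chuv} for part (3). Your explicit remark that $\qdepth(M)\geq d_0$ must imply $\beta_k^{d_0}(M)\geq 0$ for $0\leq k\leq d_0$ (a monotonicity the paper also uses tacitly in Theorem \ref{t31}) is a welcome clarification, but the route is the same.
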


\begin{proof}
(1) As in the proof of Theorem \ref{t31}, the conclusion follows from 
    Proposition \ref{p1}, Proposition \ref{p32}(1), \eqref{e31}, \eqref{eq31} and \eqref{betak}.\\
		(2) It follows from Proposition \ref{p1}, Proposition \ref{jpei}, \eqref{e31}, \eqref{eq32} and \eqref{betak}.

\noindent
(3) It follows from Proposition \ref{p1}, Proposition \ref{p32}(2), Proposition \ref{ppp}(2), \eqref{betak} and \eqref{chuv}.
\end{proof}

\begin{cor}\label{caz2-2}(Case $m=2$)
Let $n\geq 3$ be an integer. We have that:
\begin{enumerate}
\item[(1)] $\sum\limits_{j=0}^k (-1)^{k-j} \binom{\left\lceil \frac{n-1}{3} \right\rceil -j}{k-j}\binom{n-j+1}{j}
        \geq \sum\limits_{j=2}^k (-1)^{k-j} \binom{\left\lceil \frac{n-1}{3} \right\rceil -j}{k-j}\binom{n-j+1}{j-2}$ for all $0\leq k\leq \left\lceil \frac{n-1}{3} \right\rceil$.
\item[(2)] $\sum\limits_{j=2}^k (-1)^{k-j} \binom{\left\lceil \frac{n+2}{3} \right\rceil -j}{k-j}\binom{n-j+1}{j-2}\geq 0$ for all 
$2\leq k\leq \left\lceil \frac{n+2}{3} \right\rceil$.
\item[(3)] $\binom{\left\lfloor \frac{2n+1}{3} \right\rfloor+k-2}{k} - 
           \sum\limits_{j=0}^k (-1)^{k-j} \binom{\left\lceil \frac{n+2}{3} \right\rceil -j}{k-j} \binom{n-j+1}{j}
        \geq \sum\limits_{j=2}^k (-1)^{k-j} \binom{\left\lceil \frac{n+2}{3} \right\rceil -j}{k-j}\binom{n-j+1}{j-2}$ for all $0\leq k\leq \left\lceil \frac{n+2}{3} \right\rceil$.
\end{enumerate}
\end{cor}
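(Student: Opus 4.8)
The plan is to derive Corollary \ref{caz2-2} as the specialization $m=2$ of Theorem \ref{t33}, exactly as Corollary \ref{caz2-1} was obtained from Theorem \ref{t31}. First I would rewrite, for $m=2$, the three Hilbert functions feeding into Theorem \ref{t33}. Since $(1+x+\cdots+x^{m-1})^{N}$ becomes $(1+x)^{N}$, Proposition \ref{aknm} gives $\alpha_k(S/I_{n,2})=\binom{n-k+1}{k}$; in Proposition \ref{jpei} the sum over $\ell$ collapses to the single index $\ell=2$, of weight $2m-1-\ell=1$, so that $\alpha_k(J_{n,2}/I_{n,2})=\binom{n-k-1}{k-2}$, which vanishes for $k<2$; and Proposition \ref{p32} then yields $\alpha_k(S/J_{n,2})=\binom{n-k+1}{k}-\binom{n-k-1}{k-2}$ and $\alpha_k(J_{n,2})=\binom{n}{k}-\binom{n-k+1}{k}+\binom{n-k-1}{k-2}$.

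Next I would dispose of the index arithmetic. Writing $n=3q+r$ with $r\in\{0,1,2\}$ and using \eqref{e31} together with the definition of $d$ in Theorem \ref{t33}, one checks that $d=\varphi(n-1,2)=\lceil(n-1)/3\rceil$, that $d+m-1=d+1=\lceil(n+2)/3\rceil$, and that $n-d=\lfloor(2n+1)/3\rfloor$. These are exactly the floor/ceiling expressions occurring in the statement, and this small case distinction is the only genuinely fiddly ingredient.

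Then I would substitute. Part (1) is Theorem \ref{t33}(1) with the above value of $\alpha_k(S/J_{n,2})$: splitting the alternating sum into its $\binom{n-j+1}{j}$-part and its $\binom{n-j-1}{j-2}$-part and moving the latter to the other side of the inequality — where, since $\binom{n-j-1}{j-2}=0$ for $j<2$, its range may be started at $j=2$ — produces the claimed inequality on $0\le k\le\lceil(n-1)/3\rceil$. Part (2) is Theorem \ref{t33}(2) with $m=2$: the inner sum is then the single term $\binom{n-k-1}{k-2}=\alpha_k(J_{n,2}/I_{n,2})$, the range $0\le k\le d+m-1$ becomes $0\le k\le\lceil(n+2)/3\rceil$, and the vanishing of the terms with $j<2$ lets one start the sum at $j=2$, which is the stated inequality. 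Part (3) is Theorem \ref{t33}(3) with $m=2$: substituting the three-term formula for $\alpha_k(J_{n,2})$ and recalling that, by the Chu-Vandermonde identity \eqref{chuv} built into that theorem, $\sum_{j}(-1)^{k-j}\binom{d+1-j}{k-j}\binom{n}{j}=\binom{n-d+k-2}{k}=\binom{\lfloor(2n+1)/3\rfloor+k-2}{k}$, one rearranges $\beta_k^{d+1}(J_{n,2})\ge 0$ into the asserted inequality on $0\le k\le\lceil(n+2)/3\rceil$.

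I do not anticipate any real obstacle: everything reduces to Theorem \ref{t33} and Propositions \ref{aknm}, \ref{jpei} and \ref{p32}. The only points that need attention are the floor--ceiling identities of the second step and keeping track, in the third step, of which alternating sign and which summation range each binomial piece carries after the rearrangement.
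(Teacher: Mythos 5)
Your proposal is correct and takes essentially the same route as the paper, whose entire proof of Corollary \ref{caz2-2} is that it follows immediately from Theorem \ref{t33}; you simply make explicit the specialization $m=2$ (so $\binom{N,\,2}{k}=\binom{N}{k}$ and the $\ell$-sum collapses to the single term $\ell=2$ with weight $1$) together with the index identities $d=\lceil\frac{n-1}{3}\rceil$, $d+1=\lceil\frac{n+2}{3}\rceil$, $n-d=\lfloor\frac{2n+1}{3}\rfloor$, all of which check out. The only caveat is that your (correct) value $\alpha_k(J_{n,2}/I_{n,2})=\binom{n-k-1}{k-2}$ produces $\binom{n-j-1}{j-2}$ on the right-hand sides, whereas the corollary as printed has $\binom{n-j+1}{j-2}$; this discrepancy is a typo in the paper's statement rather than a gap in your argument.
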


\begin{proof}
It follows immediately from Theorem \ref{t33}.
\end{proof}

 \subsection*{Acknowledgements}

The second author was supported by a grant of the Ministry of Research, Innovation and Digitization, CNCS - UEFISCDI, 
project number PN-III-P1-1.1-TE-2021-1633, within PNCDI III.

\end{document}